\theoremstyle{plain}
\newcommand{\sgn}{\mbox{{sgn}}}
\newcommand{\xista}{x^k_{\mbox{\tiny ISTA}}}
\newcommand{\defeq}{\stackrel{\rm def}{=}}
\DeclareMathOperator*{\argmin}{argmin}
\newtheorem{thm}{Theorem}[section]
\newtheorem*{thm*}{Theorem}
\newtheorem{asu}[thm]{Assumption}
\theoremstyle{definition}
\theoremstyle{remark}
\begin{document}



\title{A Second-Order Method for Convex $\ell_1$-Regularized Optimization with Active Set Prediction}

\author{
\name{Nitish Shirish Keskar\textsuperscript{a},
 Jorge Nocedal\textsuperscript{a}, Figen \"{O}ztoprak\textsuperscript{b} and Andreas W\"{a}chter\textsuperscript{a}}
\affil{\textsuperscript{a}Northwestern University, USA;
\textsuperscript{b} Istabul Bilgi University}
}

\maketitle

\begin{abstract} We describe an active-set method for the minimization of an objective function $\phi$ that is the sum of a smooth convex function and an $\ell_1$-regularization term. A distinctive feature of the method is the way in which active-set identification and {second-order} subspace minimization steps are integrated to combine the predictive power of the two approaches. At every iteration, the algorithm selects a candidate set of free and fixed variables, performs an (inexact) subspace phase, and then assesses the quality of the new active set. If it is not judged to be acceptable, then the set of free variables is restricted and a new active-set prediction is made.
We establish global convergence for our approach, and compare the new method against the state-of-the-art code LIBLINEAR.
 \end{abstract}

\begin{keywords}
$\ell_1$-minimization; second-order; active-set prediction; active-set correction; subspace-optimization
\end{keywords}

\begin{classcode}
49M; 65K; 65H; 90C
\end{classcode}
\section{Introduction}
\label{intro}
The problem of minimizing a composite objective that is the sum of a smooth convex function and a regularization term has received much attention; see e.g. \cite{sra2011optimization,bach2012optimization} and the references therein. This problem arises in statistics, signal processing, machine learning and in many other areas of applications. In this paper we focus on the case when the regularizer is defined in terms of an $\ell_1$-norm, and  propose an algorithm that employs a recursive active-set selection mechanism designed to make a good prediction of the active subspace at each iteration.  This mechanism combines first- and second-order information, and is designed with the large-scale setting in mind.

The problem under consideration is given by
\begin{equation}    \label{prob}
        \min_{x \in \mathbb{R}^n} \ \phi(x) = f(x) + \mu \|x\|_1 .
\end{equation}
We assume that $f$ is a smooth convex function and $\mu >0$ is a fixed penalty parameter. 

The algorithm proposed in this paper is  different in nature from  the most popular methods proposed for solving problem \eqref{prob}. These include  first-order  methods, such as ISTA, SpaRSA and FISTA \cite{ista,sparsa,fista}, and  proximal Newton methods that compute a step by minimizing a piecewise quadratic model of \eqref{prob} using (for example) a coordinate descent iteration \cite{yuan2012improved,tseng2009coordinate,hsieh2011sparse,katya-tang2,istanbul,lee2012proximal,SchmidtProjectedNewton,olsen2012newton}.
{
The proposed algorithm also differs from methods that solve \eqref{prob} by reformulating it as a  bound constrained problem; for e.g. \cite{Wen:2010:FAS:1958654.1958661,koh2007interior, wen_convergence_2012, schmidt_fast, schmidt_thesis}. 
 
Our algorithm  belongs, instead, to the class of \emph{orthant-based methods} \cite{andrew2007scalable} that minimize a smooth quadratic model of $\phi$  on  a sequence of orthant faces of $\mathbb{R}^n$ until the optimal solution is found.  But unlike the orthant-based methods described in \cite{andrew2007scalable,dss} and the bound-constrained approaches in \cite{schmidt_thesis},
}
 every iteration of our algorithm consists of a \emph{corrective cycle} of orthant-face predictions and  subspace minimization steps. This  cycle is terminated when the orthant-face prediction is deemed to be reliable. After a trial iterate has been computed, a globalization mechanism accepts or modifies  it (if necessary) to ensure overall convergence of the iteration. 

The  idea of employing a correction mechanism for refining the selection of the orthant face was introduced in \cite{figi} for the case when $f$ is a convex quadratic function. That algorithm is, however, not competitive  with state-of-the-art methods in terms of CPU time because each iteration requires the exact solution of a subspace problem, which is expensive, {and because the orthant-face prediction mechanism is too liberal and can lead to long corrective cycles.}
%
%
These deficiencies are overcome in our algorithm, which introduces two key components. We employ an adaptive filtering mechanism that in conjunction with the corrective cycle yields an efficient prediction of zero variables at each iteration. We also design a strategy for solving, inexactly, the subproblems arising during each corrective step in a way that does not degrade the accuracy of the orthant-face prediction and yields important savings in computation. We show that the algorithm is globally convergent for strongly convex problems. Numerical tests on a variety of machine learning data sets suggest that our algorithm is competitive with a leading state-of-the-art code. 

The main features of our algorithm can also be highlighted by contrasting them with recently proposed proximal Newton methods for solving problem \eqref{prob}. The algorithms proposed by \cite{yuan2012improved,katya-tang2,hsieh2011sparse} and others first chose an active set of variables using first-order sensitivity information. The active variables are set to zero, and the rest of the variables are updated by minimizing a \emph{piecewise quadratic} approximation to \eqref{prob} given by
\begin{align}    \label{sqa}
   q^k(x) &= f(x^k) + (x- x^k)^T \nabla f(x^k)  + \frac{1}{2} (x - x^k)^T \nabla^2 f(x^k) (x - x^k) + \mu \| x\|_1. 
\end{align}
This minimization is performed inexactly using a randomized coordinate descent method. After a trial iterate is computed in this manner, a backtracking line search is performed to ensure decrease in $\phi(x)$.

The proximal Newton methods just outlined employ a very simple mechanism (the minimum norm subgradient) to determine the set of active variables at each iteration.  On the other hand, they solve the sophisticated lasso subproblem \eqref{sqa} that  inherits the non-smooth structure of the original problem and permits iterates to cross points of non-differentiability of $\phi(x)$. The latter property allows proximal Newton methods to refine the active set with respect to its initial choice. In contrast, our method invests a significant amount of computation in the identification of a working orthant face in $\mathbb{R}^n$, and then minimizes a simple smooth quadratic approximation of the problem on that orthant face,
\begin{align}    \label{subor}
   \bar{q}^k(x) &= f(x^k) + (x- x^k)^T \nabla f(x^k) + \frac{1}{2} (x - x^k)^T \nabla^2 f(x^k) (x - x^k) + \mu \zeta^T x,
\end{align}
where $\zeta$ is an indicator with values 0, 1 or -1, that identifies the orthant face. The working orthant is selected carefully, by verifying that the predictions made at each corrective step are realized. We do so because a simpler selection of the orthant face, such as that performed in the OWL method \cite{andrew2007scalable}, or the method described in \cite{dss} can generate poor steps in some circumstances.

Given that the two approaches (proximal Newton with coordinate descent solver and our proposed method) are  different in nature,  it is natural to ask if one of them will emerge as the preferred {second-order} technique for the solution of problem \eqref{prob}.  To answer this question we compared a MATLAB implementation of our approach  on {binary classification problems} with the well-known solver LIBLINEAR (written in C), based on CPU time. One of the main conclusions of this paper is that \emph{both approaches} have their strengths. 
Orthant-based methods have the attractive property that the subspace minimization can be performed by a direct linear solver or by an iterative method such as the conjugate gradient method, {which is efficient on a wide range of applications.}
On the other hand, the proximal Newton approach method is very effective on applications where the Hessian matrix is diagonally dominant (or nearly so).  In this case, the coordinate descent iteration is particularly efficient in computing an approximate solution of problem \eqref{sqa}.  Both approaches share the need for effective criteria for deciding when an approximate solution of the subproblem is acceptable. Most implementations of the proximal Newton method employ adaptive techniques (heuristics or rules based in randomized analysis), while our implementation employs the classic termination criteria based on the relative error in the residue of the linear system \cite{mybook}.

This paper is organized in 5 sections. In Section~\ref{algo} we outline the algorithm, paying particular attention to the  orthant-face identification mechanism. Section~\ref{scn:globalization} discusses the procedure by which we safeguard against poor steps and ensure global convergence of the algorithm. {In Section \ref{numerical}, we present a comparison of our algorithm against the state-of-the-art code LIBLINEAR for the solution of binary classification problems; some final remarks are made in Section~\ref{scn:final}.}

\section{The Proposed Algorithm}   \label{algo}

The algorithm exploits the fact that the objective function $\phi$ is smooth in any \emph{orthant face} of $\mathbb{R}^n$, which is defined as the intersection of an orthant in $\mathbb{R}^n$ and a subspace $\{ x: x_i=0, \, i \in I \subset \{1, \ldots n\}\}$.  

At every iteration, the algorithm identifies  an orthant face in $\mathbb{R}^n$ using sensitivity information, performs a minimization on that orthant face to produce a trial point, refines the orthant-face selection (if necessary), and repeats the process until the choice of the orthant face is judged to be acceptable. Upon termination of this cycle, a backtracking line search is performed where the trial points are projected onto the active orthant.

To describe the algorithm in detail, we introduce some notation.
Let $g(x)$ denote the minimum norm subgradient of the objective function \eqref{prob} at a point $x$. Thus, we have
\begin{align}
\label{eqn:MNSG}
 g_i(x) = &\begin{cases}
\nabla_i f(x) + \mu &\hspace{-1.8ex}\mbox{if } x_i>0 \mbox{ or } \left( x_i=0 \mbox{ and }  \nabla_i f(x) + \mu <0\right)\\
\nabla_i f(x) - \mu &\hspace{-1.8ex}\mbox{if } x_i<0 \mbox{ or } \left(x_i=0 \mbox{ and }  \nabla_i f(x) - \mu >0 \right)\\
0 &\hspace{-1.8ex}\mbox{otherwise}, \\
\end{cases} 
\end{align}
for $i=1, \ldots n$,  where
\[
       \nabla_i f(x) \defeq \frac{\partial f(x)}{\partial x_i}.
       \]
 At an iterate $x^k$, we define  three sets:
\begin{align}
\mathcal{A}^k &= \{ i | \, x_i^k =0 \mbox{ and } |\nabla_i f(x^k)| \leq \mu \} \label{aset} \\
\mathcal{F}^k &= \{ i | \, x_i^k \neq 0\} \label{fsure} \\
\mathcal{U}^k &=\{ i | \, x_i^k =0 \mbox{ and } |\nabla_i f(x^k)| > \mu \} . \label{unsure}
\end{align}
The variables in $\mathcal{A}^k$ are kept at zero (since the corresponding components of $g_i(x^k)$ are zero), while those in $\mathcal{F}^k$ are free to move. The remaining variables are in the set $\mathcal{U}^k$. The decision {of} which of these are allowed to move significantly impacts the efficiency of the algorithm. Using the \emph{selection mechanism} described below, we first create a partition of $\mathcal{U}^k$,
\begin{equation}  \label{char}
       {\cal U}^k={\cal U}_A \cup {\cal U}_F ,
 \end{equation}
 where the variables in $\mathcal{U}_A$ are fixed at zero and the variables in ${\cal U}_F$ are allowed to move. We then update  the active set as
  \begin{equation}   \label{active}
         \mathcal{A}^k \gets \mathcal{A}^k\cup \mathcal{U}_A,
 \end{equation}
 and compute a trial step  $d^k$ as the  (approximate) solution of the smooth quadratic problem
 \begin{align}    
       \min_{d \in \mathbb{R}^{n}} & \ \ \psi(d) =    d^Tg(x^k) 
                                          + \frac{1}{2} d^T  H^k d \nonumber \\
      \mbox{s.t.} & \ \ d_i=0, \ i \in \mathcal{A}^k , \label{aomodel}
\end{align}
where  $H^k = \nabla^2 f(x^k)$.  The trial iterate is defined as 
\[   {\hat x}^k= x^k + d^k. \]

We then start the \emph{corrective cycle} and check whether all variables in the set ${\cal U}_F$ moved as predicted; i.e.,  whether
\begin{equation} \label{goods}
        \sgn([{\hat x^k}]_i) = \sgn(-[g(x^k)]_i) \quad\mbox{for all   } i \in {\cal U}_F.
\end{equation}
Any variable $j \in \mathcal{U}_F$ for which this equality does not hold, is removed from the set $\mathcal{U}_F$ and added to $\mathcal{U}_A$.  The set $\mathcal{A}^k$ is then updated according to \eqref{active} and a new trial step is recomputed by solving \eqref{aomodel}. We repeat this corrective cycle until all predictions are correct and the trial point $\hat x^k$ satisfies \eqref{goods}.

The algorithm then performs a projected backtracking line search along $d^k$ 
to ensure that the resulting point {yields} a decrease in the piecewise quadratic model $q^k(x)$ defined in \eqref{sqa}. (We do not perform the line search on the objective function \eqref{prob} {as that is more expensive,} and the globalization mechanism described in Section~\ref{scn:globalization} only requires a decrease in $q^k(x)$.) 

At iteration $k$, we identify the current orthant face based on sensitivity information \eqref{eqn:MNSG} and define the vector $\zeta^k$ by
\begin{equation}  \label{zeta}
      \zeta^k_i = \begin{cases} \sgn([x^k]_i) \quad \ \ \mbox{if }  [x^k]_i\neq 0 \\ \sgn(-[g(x^k)]_i) \quad \mbox{if } [x^k]_i= 0 . \end{cases}  
\end{equation}
Let  $\mathcal{P}^k(x)$ be the projection operator {that projects $x \in \mathbb{R}^n$ onto the orthant defined by $\zeta^k$;} i.e., 
\begin{equation}
      \mathcal{P}^k_i(x) = \begin{cases} 
      x_i & \mbox{if } \sgn(x_i) = \sgn(\zeta_i^k) \\
      0 & \mbox{otherwise.}
      \end{cases}  
\end{equation}
We then search for the largest step size $\alpha \in \{2^0,2^{-1},2^{-2},\cdots \}$ such that 
$$q(x^k) \geq  q( \mathcal{P}^k(x^k + \alpha\cdot d^k)),$$
where $q$ is the non-smooth quadratic approximation given by \eqref{sqa}. 
Such a step size exists because $d^k$ is a descent direction for the smooth quadratic function $\bar{q}^k$ and because the trial point lies within the orthant defined by $\zeta^k$ for sufficiently small steps (see Theorem~\ref{thm:global_convergence} in the appendix).

Before giving a detailed description of the algorithm, we describe the  \emph{selection mechanism} that, at the beginning of each corrective cycle, defines the splitting \eqref{char} of the  set $\mathcal{U}^k$ into variables ${\cal U}_A $, that are kept at zero, and variables ${\cal U}_F$, that are allowed to move. 

At the start of the algorithm, we select a scalar $\eta \in (0,1)$ and set $| \mathcal{U}_F| = \tau^0 \defeq \lfloor \eta \times n \rfloor$; i.e., the cardinality of the set $\mathcal{U}_F$ is a fraction of the dimension of the problem. On subsequent iterations, we update the  parameter $\tau^k$  based on  its previous value $\tau^{k-1}$ and the number of iterations in the previous corrective cycle. If there were no corrections in the previous corrective cycle, we set $\tau^{k+1} = 2 \tau^k$ to allow more variables to change at the next outer iteration; otherwise  we keep the value of $ \tau^k$ unchanged. Since the number of variables in ${\cal U}_F$ cannot be larger than $|{\cal U}^k|$,  the actual size of ${\cal U}_F$ is given by
\[
     |\mathcal{U}_F |= \hat{\tau}^k \defeq \min(|\mathcal{U}^k|,\tau^k). 
\]
%
%
%
We use a greedy strategy to populate the sets $\mathcal{U}_A$ and  $\mathcal{U}_F$: we collect in ${\cal U}_F$ the $\hat \tau^k$ variables  in $\mathcal{U}^k$ with the largest components of the subgradient $|g(x^k)|$. Thus,  for any $i \in \mathcal{U}_F$ and $j \in \mathcal{U}_A$, we have $|g_i(x^k)|\geq |g_j(x^k)|$.

A formal description of the overall method is given in Algorithm~\ref{alg1}. 

\begin{algorithm}[H]
\caption{Preliminary Adaptive Orthant-Based Method}
\label{alg1}
\begin{algorithmic}[1]
\STATE Given $x^0 \in \mathbb{R}^n$, $L>0$, $\mu>0$,  ${\eta}\in (0,1)$.\\  Let  $\tau^0 = \lfloor \eta\times n \rfloor$.

\WHILE{$k=0,1,2,\cdots$ and stopping criterion not met}

\STATE {\it Active-Set Identification}: \label{alg:basic_start}
\begin{align*}
\mathcal{A}^k &= \{ i | (x_i)^k =0 \mbox{ and } |\nabla_i f(x^k)| \leq \mu \} \\
\mathcal{F}^k &= \{ i | (x_i)^k \neq 0\} \\
\mathcal{U}^k &=\{ i | (x_i)^k =0 \mbox{ and } |\nabla_i f(x^k)| > \mu \} 
\end{align*}
\STATE {\it Selection Mechanism}: \newline
{Compute $g(x^k)$ by \eqref{eqn:MNSG} and $\zeta^k$ by \eqref{zeta}.}

\STATE 
Set $\hat{\tau}^k \gets \min(|\mathcal{U}^k|,\tau^k)$.

\STATE Choose $\mathcal{U}_F, \mathcal{U}_A \subseteq \mathcal{U}^k$ such that $\mathcal{U}_F\cap\mathcal{U}_A=\emptyset$, $|\mathcal{U}_F|=\hat{\tau}^k$ and for any $i \in \mathcal{U}_F$ and $j \in \mathcal{U}_A$, $|g_i(x^k)|\geq |g_j(x^k)|$. 
\STATE Set $\mathcal{A}^k \gets \mathcal{A}^k\cup \mathcal{U}_A$.
\STATE Compute or update second-order approximation $H^k$.
\STATE {{\it Corrective Cycle}:} \newline
Set $V^k \gets \mathcal{U}_F$ and $j \gets 0$.
\WHILE{$V^k \neq \emptyset $}
\STATE $$d^k = \argmin_{d_i=0, i \in \mathcal{A}^k} d^T g(x^k) + \tfrac{1}{2} d^T H^k d$$
\STATE Set $\hat{x}^k \gets x^k+d^k$.
\STATE Set $V^k = \{ i \in \mathcal{U}_F \setminus \mathcal{A}^k | \sgn(\zeta_i^k) \neq \sgn(\hat{x}_i^k ) \}$.
\STATE 
Set $\mathcal{A}^k \gets \mathcal{A}^k\cup V^k$ and $j \gets j+1$.
\ENDWHILE \label{alg:corrective_loop_ends}
\IF{$j=1$}
\STATE Set $\tau^{k+1} = 2 \cdot \tau^k$.
\ENDIF
\STATE {\it Projected Line Search}: \newline Set $\alpha \gets 1$.
\WHILE{ $q(x^k) > q( \mathcal{P}^k(x^k + \alpha\cdot d^k)) $} \label{projls_loop_starts}
\STATE Set $\alpha \gets {\alpha/2}$.
\ENDWHILE \label{alg:basic_end}


%
\STATE Set $x^{k+1} =  \mathcal{P}^k(x^k + \alpha\cdot d^k)$.
\ENDWHILE
\end{algorithmic}
\end{algorithm}


In this paper we assume that the quadratic model \eqref{aomodel} employs exact Hessian information, i.e. $H^k = \nabla^2  f(x^k)$, and that we  perform an approximate minimization of this problem using the conjugate gradient method in the appropriate subspace of dimension $(n-|\mathcal{A}^k|)$; see e.g., \cite{mybook}. The matrix $H^k$ can  also be defined by quasi-Newton updates, specifically using the compact representations of limited-memory BFGS matrices \cite{ByrdNoceSch94}. Although we do not explore a quasi-Newton variant in this paper, we expect it to be  effective in many applications.

Our \textit{selection mechanism} for defining the splitting \eqref{char}  is motivated by the following considerations. If  all variables in ${\cal U}^k$ were allowed to move, the algorithm would have similar properties to the OWL  method \cite{andrew2007scalable}, whose performance is not uniformly successful (see Section~\ref{numerical}). Indeed, we observed a more reliable performance when the size of ${\cal U}_F$ is limited.  This also has computational benefits because the subproblem \eqref{aomodel} is less expensive to solve when the number of free variables is smaller (i.e., when the set ${\cal A}^k$ is larger).  On the other hand, in the extreme case $| {\cal U}_F |=1$, the algorithm resembles  a classical active-set method, which is not well-suited for large-scale problems.

These trade-offs are addressed by the dynamic strategy employed in steps 5 and 17 of Algorithm~\ref{alg1}. Initially, we choose ${\cal U}_F$ to be a small subset of ${\cal U}^k$  (by selecting $\eta$ to be small).  The algorithm increases the size of ${\cal U}_F$ in subsequent iterations if there is evidence that the current choice is too restrictive. As as indicator we observe the number of iterations in the previous corrective cycle. A small number of corrections (in our implementation this number is 1) suggests that the choice of ${\cal U}_F$ may be too conservative and the size of ${\cal U}_F$ is doubled at the next outer iteration. We have found that this selection mechanism leads to more gradual and controlled changes in the active set compared to other orthant-based methods like OWL and the method proposed in Section 5 of \cite{dss}. 


The projected backtracking line search in Algorithm~\ref{alg1} differs from that used by other orthant-based methods in that it is based on the quadratic model and not the objective function.  As in other orthant-based methods, the projection promotes sparsity in the iterates and provides some control for steps that leave the current orthant, outside of which the smooth approximation in \eqref{aomodel} is not valid. But in contrast to other orthant-based methods, such as OWL, the line search is not the main globalization mechanism in our algorithm, as described next. 

\section{Globalization Strategy}
\label{scn:globalization}
While  Algorithm~\ref{alg1} generally works  well in practice, it may fail (cycle) when the changes in the active set are not sufficiently controlled. By adding  a globalization mechanism we ensure that all iterates generated by the algorithm provide sufficient reduction in the objective function and converge to the solution.
Our mechanism employs the iterative soft-thresholding algorithm (ISTA)~\cite{ista,Daubechies:04} to generate a reference point.  Because the ISTA method enjoys a global linear rate of convergence on strongly convex problems, it provides a benchmark for the progress of our algorithm.

We modify Algorithm~\ref{alg1} as follows. The iterate computed in line 23 is now regarded as a trial iterate and denoted by $\hat x^k$. To decide if this point is acceptable we check whether it produces a lower function value than the ISTA step computed from the starting point of the iteration, $x^k$.
If so, we accept the trial point; otherwise, we search along the segment joining $\hat x^k$ and the ISTA point $\xista$ to find an acceptable point.
Given a Lipschitz constant $L$ for the gradient of $f$, the cost of computing the ISTA step is negligible since gradient information is already available at $x^k$. However, the evaluation of $\phi(\xista)$ incurs an additional cost. To get around this expense, we use the value of an upper quadratic approximation of $\phi$ at $\xista$ as a surrogate to $\phi(\xista)$. More specifically, assuming that $L$ is a Lipschitz constant of  $\nabla f$, we define the value of the surrogate function as
\begin{align}
\label{eqn:gamma}
\Gamma^k &= f(x^k) + \nabla f(x^k)^T (\xista-x^k)  + \tfrac{L}{2} \| \xista - x^k\|_2^2 + \mu \|\xista\|_1 .
\end{align}
The computation of $\Gamma^k$ requires only one inner product.
The complete version of the algorithm, including the globalization mechanism, is given in Algorithm~\ref{alg:complete}.
\begin{algorithm}[H]
\caption{Orthant-Based Adaptive Method (OBA) }
\label{alg:complete}
\begin{algorithmic}[1]
\STATE Given $x^0 \in \Re^n$, $L>0$, $\mu>0$,  $\eta \in (0,1)$ and $\epsilon>0$.\\ Let  $\tau^0 = \lfloor \eta\times n \rfloor$

\WHILE{$k=0,1,2,\cdots$ and stopping criterion not met}
\STATE Carry out steps 1 -- 22 of Algorithm \ref{alg1}.
\STATE Set $\hat{x}^{k} =  \mathcal{P}^k(x^k + \alpha\cdot d^k)$.

\STATE {\it Globalization}: \newline
Compute ISTA step at $x^k$ as $$\xista = \mathcal{S}_{\mu/L} (x^k - \tfrac{1}{L} \nabla f(x^k))$$
 where $\mathcal{S}_\alpha(x)$ is a component-wise operator defined as $ \mathcal{S}_\alpha (x)_i = \max\{| x_i | -\alpha,0\} \cdot \sgn (x_i)$. 
\STATE 
Set $\bar{d}^k \gets \hat{x}^k - \xista$ and $\bar{\alpha} \gets 1$.
\STATE Calculate $\Gamma^k$ using \eqref{eqn:gamma}.
\WHILE{$\phi(\xista+\bar{\alpha}\cdot \bar{d}^k) > \Gamma^k$} \label{alg:dogleg_sufficiency}
\STATE  Set $\bar{\alpha} \gets \bar{\alpha}/2$.
\IF{$\bar{\alpha} < \epsilon$}
\STATE Set $\bar{\alpha}\gets 0$.
\ENDIF
\ENDWHILE
\STATE Set $x^{k+1} =\xista+\bar{\alpha}\cdot \bar{d}^k$.
\ENDWHILE
\end{algorithmic}
\end{algorithm}

The following convergence result is proven in the appendix.

\begin{thm*}
Assume that $f$ is continuously differentiable and strongly convex and that $\nabla f$ is Lipschitz continuous. Then, the iterates  $\{x^k\}$ generated by Algorithm~\ref{alg:complete} converge to the optimal solution $x^\star$ of problem \eqref{prob} at a linear rate.
\end{thm*}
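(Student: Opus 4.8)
\noindent\emph{Sketch of the approach.}
The plan is to show that the globalization in Algorithm~\ref{alg:complete} forces every accepted iterate to reduce $\phi$ at least as much as a pure ISTA step would, and then to use the linear contraction that ISTA enjoys on strongly convex problems. Since $f$ is strongly convex, say with modulus $\sigma>0$, and $\mu\|\cdot\|_1$ is convex, $\phi$ is $\sigma$-strongly convex; hence \eqref{prob} has a unique minimizer $x^\star$ and $\tfrac{\sigma}{2}\|x-x^\star\|_2^2\le\phi(x)-\phi(x^\star)$ for all $x$. I would first record that the inner loops are finite, so that $\hat x^k$ and $x^{k+1}$ are well defined: the corrective cycle of Algorithm~\ref{alg1} moves at least one index into $\mathcal{A}^k$ on every pass and thus stops after at most $|\mathcal{U}_F|$ iterations, while the projected line search terminates by Theorem~\ref{thm:global_convergence}. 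The key reduction is the inequality $\phi(x^{k+1})\le\Gamma^k$. Writing the upper quadratic model
\[
   Q^k(u)=f(x^k)+\nabla f(x^k)^T(u-x^k)+\tfrac{L}{2}\|u-x^k\|_2^2+\mu\|u\|_1 ,
\]
the descent lemma (valid since $L$ is a Lipschitz constant of $\nabla f$) gives $\phi(u)\le Q^k(u)$ for every $u$; in particular $\phi(\xista)\le Q^k(\xista)=\Gamma^k$. Hence the loop of lines 8--13 always accepts: if backtracking drives $\bar\alpha$ below $\epsilon$ it is reset to $0$, and then $x^{k+1}=\xista$ satisfies $\phi(\xista)\le\Gamma^k$; otherwise the loop exits exactly when $\phi(\xista+\bar\alpha\bar d^k)\le\Gamma^k$. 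In both cases $\phi(x^{k+1})\le\Gamma^k$.

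Next I would exploit that $\xista$ is the exact minimizer of $Q^k$, so that $\Gamma^k=\min_u Q^k(u)$. The optimality condition $0\in\nabla f(x^k)+L(\xista-x^k)+\mu\,\partial\|\xista\|_1$, combined with convexity of $\|\cdot\|_1$ and $\sigma$-strong convexity of $f$, yields after a routine completion of squares the three-point inequality
\[
   \Gamma^k \;\le\; \phi(z)+\tfrac{L-\sigma}{2}\|z-x^k\|_2^2-\tfrac{L}{2}\|\xista-z\|_2^2
   \qquad\text{for all }z .
\]
Evaluating this at $z=\theta x^\star+(1-\theta)x^k$ with $\theta\in[0,1]$, dropping the last (nonpositive) term, and using convexity of $\phi$ together with the strong-convexity bound $\|x^k-x^\star\|_2^2\le\tfrac{2}{\sigma}(\phi(x^k)-\phi(x^\star))$ gives
\[
   \Gamma^k-\phi(x^\star)\;\le\;\Big[\,1-\theta+\tfrac{(L-\sigma)\theta^2}{\sigma}\,\Big]\big(\phi(x^k)-\phi(x^\star)\big).
\]
Minimizing the bracket over $\theta$ produces a constant $\rho<1$ depending only on $L/\sigma$ (explicitly $\rho=1-\tfrac{\sigma}{4(L-\sigma)}$ when $L\ge\tfrac32\sigma$, and $\rho=\tfrac{L-\sigma}{\sigma}<1$ otherwise). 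Combined with $\phi(x^{k+1})\le\Gamma^k$ this gives the Q-linear recursion $\phi(x^{k+1})-\phi(x^\star)\le\rho\,(\phi(x^k)-\phi(x^\star))$, and the strong-convexity bound converts it into R-linear convergence of $\{x^k\}$ to $x^\star$.

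The main obstacle is that the accepted point $x^{k+1}$ is \emph{not} the ISTA iterate $\xista$ in general: it is the output of a backtracking search on the segment $[\xista,\hat x^k]$, and the only property the algorithm certifies is $\phi(x^{k+1})\le\Gamma^k$. Consequently the classical ISTA analysis, which tracks $\|\xista-x^\star\|$, cannot be applied to $x^{k+1}$ directly. The resolution is to bound the \emph{surrogate value} $\Gamma^k-\phi(x^\star)$ rather than any distance involving $x^{k+1}$: because $\Gamma^k$ is the minimum of the upper model $Q^k$ (equivalently the value \eqref{eqn:gamma} at the proximal-gradient point), it already carries the full linear-rate information of the ISTA step. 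This decouples the arbitrary second-order trial step $\hat x^k$ from the convergence argument, so that the OBA method inherits ISTA's contraction factor irrespective of the quality of the subspace phase; the globalization in lines 5--14 is precisely what guarantees this inheritance.
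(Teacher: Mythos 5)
Your proposal is correct and takes essentially the same route as the paper's own proof: both reduce the analysis to the certified inequality $\phi(x^{k+1})\le\Gamma^k$ enforced by the globalization loop, then use the fact that $\Gamma^k$ is the minimum value of the upper model $Q^k$ together with strong convexity of $f$ to bound $\Gamma^k\le\phi(z)+\tfrac{L-\lambda}{2}\|z-x^k\|_2^2$ at a point $z$ on the segment between $x^k$ and $x^\star$, concluding a linear contraction in function values. The only differences are in the finishing step and are cosmetic: the paper takes $z=x^k+\tfrac{\lambda}{L}(x^\star-x^k)$ and applies \emph{strong} convexity of $\phi$ on that segment so the quadratic error term cancels exactly, giving the sharper rate $1-\lambda/L$, whereas you use plain convexity plus the quadratic-growth bound and optimize over $\theta$, obtaining a worse (but still valid) constant; also, your citation of Theorem~\ref{thm:global_convergence} for termination of the projected line search should point to Theorem~\ref{thm:finite_termination}.
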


\section{Numerical Experiments}   \label{numerical}

In this section, we demonstrate the viability of our approach.
While our method  applies to any convex function with an additive $\ell_1$-regularizer, we focus on the specific problem of binary classification using logistic regression. This problem is well studied with theoretical guarantees and many data sets available of varying sizes, structures and fields of study. { Further, the results reported on this problem are representative of the performance of OBA  on other functions (including multi-class logistic regression, probit regression and LASSO) where similar trends are observed.} 
We direct the reader to \cite{friedman_regularization_2010} and the references therein for details regarding the function $f$ and the statistical justifications of this choice. 

The data sets chosen for comparison are listed in Table~\ref{table:datasets}. Synthetic is a randomly generated, balanced, non-diagonally dominant problem; the process for generating this problem is described in the appendix. Alpha is a data set from the Pascal Large Scale Learning Challenge \cite{alpha_dataset}. Both these data sets have been feature-{wise} normalized to $[-1,1]$. Details for the other data sets along with their preprocessing steps can be found in \url{http://www.csie.ntu.edu.tw/~cjlin/liblinear} and the references therein.

\begin{table}
\tbl{Data sets}
{
\begin{tabular}{rcc}
\toprule
Data set & number of data points & number of features  \\
\colrule
Gisette & 6000 & 5000 \\
RCV1 &  20242 & 47236 \\
Alpha & 500000& 500 \\
KDDA & 8407752 & 20216830 \\
KDDB & 19264097& 29890095 \\
Epsilon & 400000 & 2000     \\
News20 &  19996 & 1355191 \\
Synthetic &   5000 & 5000   \\
\botrule
\end{tabular}
}
\label{table:datasets}
\end{table}

 A variety of methods has been proposed for solving problem \eqref{prob}, and high-performance implementations of some of these methods are available. One of the most popular codes is newGLMNET \cite{yuan2012improved}, which is a C-implementation of a proximal Newton method and is a part of the LIBLINEAR package. Every iteration of this method identifies the active set as a subset of $\mathcal{A}^k$ as defined in \eqref{aset}, and then solves problem \eqref{sqa} inexactly using a randomized coordinate descent algorithm. The termination criterion for this inner loop is based on the $\ell_1$-norm of the minimum norm subgradient and adjusted by a heuristic as the iteration progresses.


 We implemented Algorithm \ref{alg:complete} in MATLAB, where we chose $\eta=0.01$ and $\epsilon=10^{-4}$. Subproblem \eqref{aomodel} is solved inexactly via the conjugate gradient algorithm. The termination criterion is based on the relative tolerance of the linear system: The inner loop stops as soon as the conjugate gradient iterate $p$ satisfies
 \begin{align*}
 \frac{\| H^k p + g^k \|_\infty}{\|g^k\|_\infty} \leq 0.1.
 \end{align*}
 
{We also compare with the OWL method \cite{andrew2007scalable}, as implemented by Schmidt \cite{schmidt_thesis}. The OBA algorithm with the selective-corrective mechanism removed is somewhat related to OWL. The primary differences between the two include the procedure of handling the active set constraints in the subproblem and the alignment step included in OWL. Specifically, OWL minimizes the quadratic model in \eqref{aomodel} over $\mathbb{R}^n$, aligns the search direction and then carries out a projected line search onto the active set.
 
 For all test problems, the regularization parameter $\mu$ was chosen through a $5$--fold cross validation. LIBLINEAR and OBA use the exact Hessian in defining the quadratic model \eqref{aomodel} and \eqref{sqa} while OWL uses a limited-memory BFGS approximation. Further, in order to solve singular problems, LIBLINEAR adds a small multiple (specifically, $10^{-12}$) of the identity to the Hessian and our algorithm uses the value of $10^{-8}$. LIBLINEAR employs a secondary mechanism to guard against singularity: it projects the result of the one dimensional optimization in the coordinate descent step onto the set $[-10,10]$. }

\subsection{Test Results}
\begin{figure}
\centering
\includegraphics[scale=0.7]{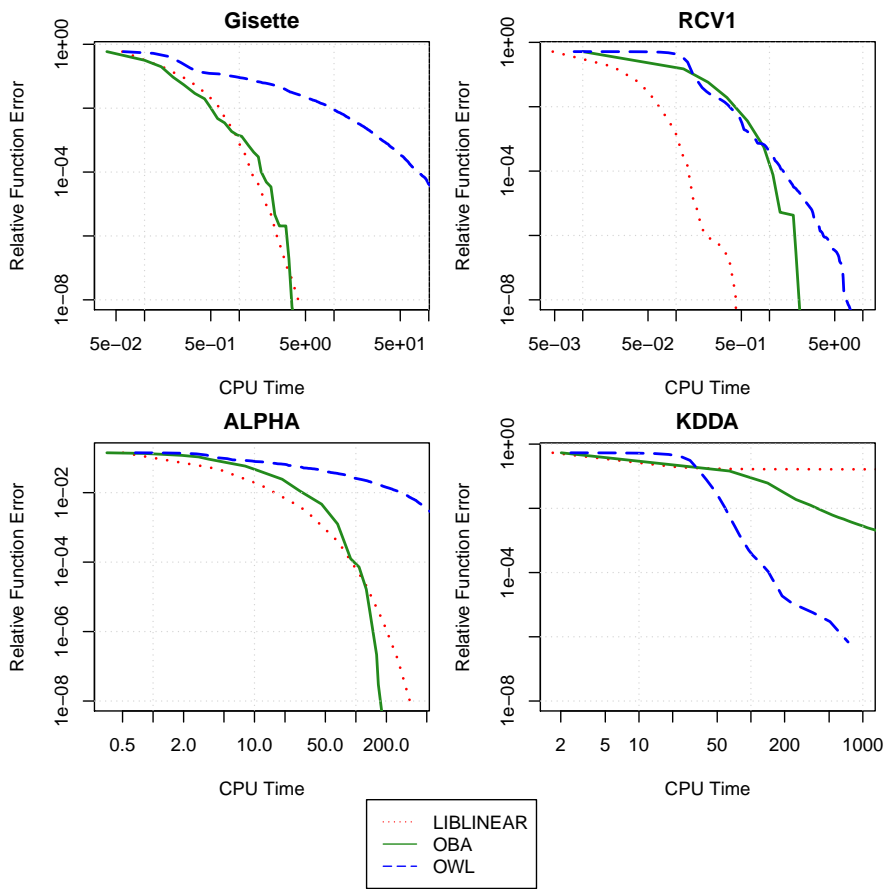}
\caption{  Relative error \eqref{ratio} in the objective function (vertical axis) versus CPU time -- Part 1}  
\label{fig:fig1}
\end{figure}

\begin{figure}
\centering
\includegraphics[scale=0.7]{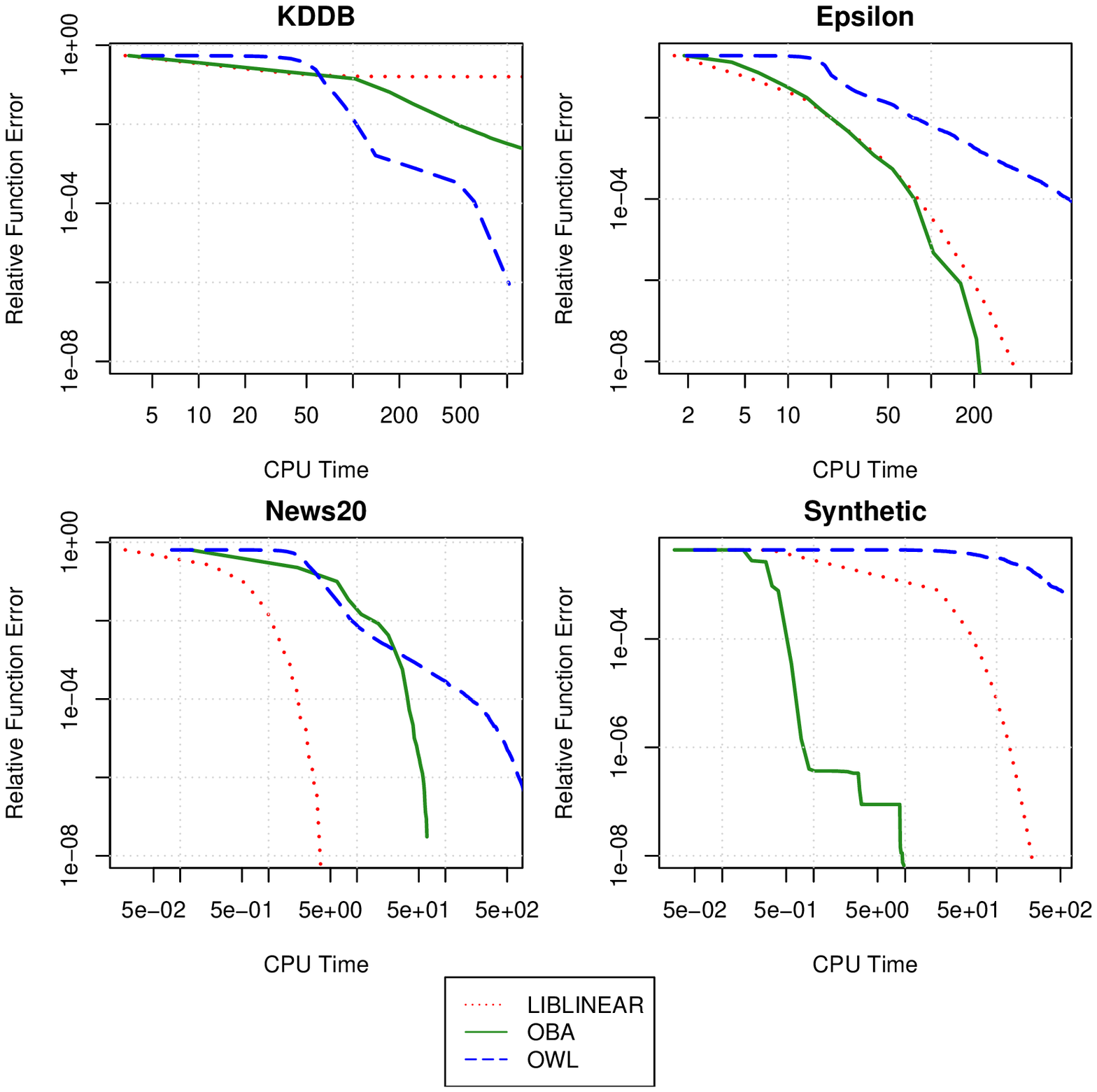}
\caption{  Relative error \eqref{ratio} in the objective function (vertical axis) versus CPU time -- Part 2}   
\label{fig:fig2}
\end{figure}

The comparison of the method proposed in this paper, Algorithm~2 (OBA), against LIBLINEAR and OWL  is presented in Figures \ref{fig:fig1} and \ref{fig:fig2}.  We  plot the relative function error defined as 
\begin{equation}  \label{ratio}
\frac{\phi(x^k)-\phi(x^\star)}{1+\phi(x^\star)}
\end{equation}
 against CPU time. The value of $\phi(x^\star)$ was obtained by running our  algorithm to a tight tolerance of $10^{-10}$ or until a time limit of 5000 CPU seconds was exceeded. The tolerance used corresponds to the one defined in \cite{istanbul}. The initial iterate for all methods was the zero vector. 

We can see that for RCV1 and News20, the performance of OBA  is inferior to LIBLINEAR; however, for KDDA, KDDB, Epsilon and Synthetic, the performance is superior. For Gisette and Alpha, the performance is roughly comparable irrespective of the value of the relative function error. OWL has gained a reputation as an algorithm which performs well but unreliably so. The experiments support this opinion. For problems like KDDA or KDDB, the performance of OWL is superior to both LIBLINEAR and OBA; however, for other problems like Synthetic, Alpha, Epsilon and Gisette, OWL fails to be competitive due to poor steps and rapid changes in working orthant faces.

 {We emphasize that the improved performance of the proposed method is driven  by the selective-corrective mechanism as opposed to the ISTA backup. The backup was \textit{never} required in the reported experiments for our method which, in contrast to other orthant-based methods, enjoys global convergence properties.}


\subsection{Sparsity}
\label{section:sparsity}
It is natural to ask whether an orthant-based method such as OBA is  as effective at generating sparsity in the solution as a proximal Newton method, such as LIBLINEAR. In  proximal Newton methods the non-smoothness of the original problem is retained in the subproblem \eqref{sqa}, and sparsity arises because the solution of the subproblem typically lies at points of non-differentiability.  In contrast, orthant-based methods solve a series of smooth problems that  have no tendency of inducing sparsity in the solution by themselves, but achieve it through the projection of the trial point onto the working orthant.

Both methods, LIBLINEAR and OBA, also promote sparsity through the definition of the active set at the beginning of each (outer) iteration, but the construction of the active set differs in the two methods.  LIBLINEAR fixes only a subset of the variables in the set $\mathcal{A}^k$ to zero; thus allowing some variables in ${\cal A}^k$  and all variables in the set ${\cal U}^k$ to move.  On the other hand, OBA  fixes all of the variables in ${\cal A}^k$ to zero and additionally fixes more variables in ${\cal U}^k$  through the selection mechanism and the corrective cycle. Therefore, the approach in LIBLINEAR can be considered  more liberal in that it releases more zero variables, while the approach in OBA can be regarded as more restrictive.  Nevertheless, OBA becomes increasingly more liberal as the iteration progresses because the selection mechanism allows the size of the set ${\cal U}_F$ to double under certain circumstances (see step 17 of Algorithm~\ref{alg1}).

In the light of these  algorithmic differences, it is difficult to predict the relative ability of the two methods at generating sparse solutions.  To explore this, we performed the following experiments using our data sets and recorded the sparsity in the solution.   LIBLINEAR was used to solve the problems with the tolerance of their stopping criterion set to $10^{-3}$ (which yielded better misclassification rates than the default value of $10^{-2}$), and OBA was then used to solve the problems to a similar accuracy in the objective function.  The results are presented  in Table~\ref{table:sparsity} and show that the two methods achieve similar values of sparsity, with OBA being somewhat more effective.

\begin{table}
\tbl{Percentage of Zeros in Solution}
{
\begin{tabular}{crr}
\toprule
Data set   & LIBLINEAR & OBA \\
\colrule
Gisette   & 88.92    & \textbf{90.52}     \\ 
RCV1      & 97.62   & \textbf{97.65}     \\ 
Alpha     & \textbf{5.60}    & 5.20   \\ 
KDDA    & 98.43   & \textbf{98.71}      \\ 
KDDB      & 97.11   & \textbf{97.87}     \\
Epsilon   & 44.60   & \textbf{69.20}     \\ 
News20    & \textbf{99.60}   & 99.37    \\ 
Synthetic & 56.86  & \textbf{58.82}     \\ 
\botrule
\end{tabular}
}
\label{table:sparsity}
\end{table}

\subsection{Conjugate Gradients versus Coordinate Descent}
{ Let us now focus on the methods used for solving the subproblems that incorporate second-order information about the objective function.} It is natural to employ the conjugate gradient (CG) method in OBA,  given that the subproblem \eqref{subor} is smooth and that the CG method is an optimal Krylov process that can exploit problem structure effectively. An alternative to the CG method is the randomized coordinate descent algorithm, which has gained much popularity in recent years \cite{friedman_regularization_2010,nesterov2012efficiency,richtarik2014iteration}

A drawback of  coordinate descent for smooth unconstrained optimization is that it can be slow when the Hessian is not diagonally dominant. We experimented with a coordinate descent solver for the subproblem in OBA and found that its overall performance is inferior to that of the CG method.

The situation is quite different in a proximal Newton method where the subproblem is  non-smooth. In that case, it is easy to compute the exact minimizer of  \eqref{sqa} along each coordinate direction, thereby dealing explicitly with the non-differentiability of the original problem. Since this one dimensional minimization may return zero as the exact solution, the proximal coordinate descent method provides an active-set identification mechanism for the overall algorithm.  Thus, although sensitivity to the lack of diagonal dominance may still be present, it is of a lesser concern due the benefits of its active set identification properties. Furthermore, applications in text classification and other areas often lead to problems with Hessians that are diagonally dominant \cite{Greene:2006icml}.

This discussion motivates us to look more closely at the issue of diagonal dominance and its effect on the two methods.   In order to quantify the level of diagonal dominance, we use the metric  employed, for example, in \cite{wright2014coordinatedescent}. Given any  symmetric matrix $A$, we define the level of diagonal dominance 
of $A$ as
\begin{align}
\label{eqn:wright_equation}
\mathcal{D}(A) = \frac{\max_i \|A_i\|_2}{\max_i |A_{ii}|},
\end{align}
where $A_i$ denotes the $i^{th}$ column of $A$ and $A_{ii}$ denotes the $i^{th}$ diagonal element of $A$. The smaller the value of $\cal D$, the closer is $A$ to being diagonally dominant. In Table \ref{table:diagonal_dominance}
we report the values of $\mathcal{D}\left(\nabla^2 f(x^0)\right)$ for all data sets in Table~\ref{table:datasets}. 


\begin{table}[h]
\caption{The value of $\mathcal{D}(\nabla^2 f(x^0))$ as defined in \eqref{eqn:wright_equation}}
\label{table:diagonal_dominance}
\begin{center}

    \begin{tabular}{| c | r |}
    \hline
    Data set   & $\mathcal{D}(\nabla^2 f(x^0))$ \\ \hline \hline
    Gisette   &      57.99             \\ \hline
    RCV1      & 1.88                  \\ \hline
    Alpha     & 9.93                   \\ \hline
    KDDA    & 1.80                   \\ \hline
    KDDB      &     1.50                   \\ \hline
    Epsilon   &      5.55              \\ \hline
    News20    & 3.29                   \\ \hline
    Synthetic & 69.42                 \\ \hline
    \end{tabular}

\end{center}
\end{table}

Let us begin by considering problem Synthetic, which was specifically constructed to have a high value of $\mathcal{D}$ (see Appendix \ref{reproducible} for details). We observe from Figure~\ref{fig:fig2} that LIBLINEAR performs poorly  compared to OBA.  This may be an indication that proximal Newton methods are sensitive to a lack of diagonal dominance.
In fact, by altering this problem so that $\cal D$ increases, the performance of LIBLINEAR deteriorates. The text classification tasks (RCV1 and News20), which are empirically observed to be diagonally dominant \cite{Greene:2006icml}, have low values of $\mathcal{D}$ and indeed, LIBLINEAR converges quickly\footnote{Interestingly, the LIBLINEAR website and manual convey that LIBLINEAR is known to perform well on document classification tasks but not necessarily on others.}. However, LIBLINEAR also performs well on problem Gisette for which $\cal D$ is large and poorly on KDDB for which $\cal D$ is low.  An examination of the rest of the results prevents us from establishing a clear correlation between the value of $\cal D$ and the relative performance of the two methods.  We conclude that in $\ell_1$-regularized problems the {adverse effects} of diagonal dominance appear to be less pronounced than for smooth optimization.  Other factors such as the frequency of orthant changes and the inexactness in the subproblem solution may also play a crucial role in explaining the performance differences.  The identification of problem characteristics that determine which method performs better for a given instances requires further investigation.

\section{Final Remarks}\label{scn:final}
In this paper, we  presented a second-order algorithm for solving convex $\ell_1$-regularized problems. At each iteration,  the algorithm tries to predict the orthant face containing the solution,  solves a smooth quadratic subproblem on this orthant face, and then invokes a corrective cycle that greatly improves the efficiency and robustness of the algorithm. We globalized the method by using the ISTA step as a reference for the desired progress.  This enabled us to prove a linear convergence rate of the iterates for strongly convex problems. {The ISTA backup is rarely used in practice (and never in the reported experiments) and thus, our theoretical result applies to a very robust method that invokes the safeguarding very rarely. This globalization procedure is analogous to  a Newton trust-region method where the underlying method is known to be very effective but convergence can only be proved by overcoming pathological situations with a first-order Cauchy step.} Numerical experiments for logistic regression data sets show that our algorithm is competitive in terms of CPU time with the LIBLINEAR C-code, even though our implementation is in MATLAB. The algorithm is also effective in generating sparse solutions quickly.
Overall, our experiments indicate that orthant-based methods are a viable alternative to proximal Newton methods.


\nocite{nesterov2004}
\bibliographystyle{gOMS}
\bibliography{OBA_references}

\appendices
\section{Convergence Analysis}   \label{global}
\setcounter{equation}{0}
Recall that we wish to solve the problem 
\begin{equation*}    
        \min_{x \in \mathbb{R}^n} \ \phi(x) = f(x) + \mu \|x\|_1.
\end{equation*}
For the purpose of our analysis, we make two assumptions:

\begin{asu}
\label{asm:stronglyconvex}
The function $f$ is in $C^1$ and strongly convex with parameter $\lambda>0$; i.e., for any $x,y \in \mathbb{R}^n$ and $t\in [0,1]$:
\begin{align}
f(t x+(1-t)y) & \leq  t f(x)+(1-t)f(y)-\frac{1}{2}\lambda t (1-t)\|x-y\|_{2}^{2}.
\end{align}
\end{asu}
As shown in Nesterov (2004), for continuously differentiable functions, this assumption is equivalent to
\begin{align}
f(y) \geq f(x) + \nabla f(x)^T(y-x) + \frac{\lambda}{2} \| y - x \|_2^2 \qquad \text{for all }x,y \in \mathbb{R}^n.
\label{eqn:strongconvexity_differentiable}
\end{align}

\begin{asu}
\label{asm:lipschitz}
The gradient of $f$ is Lipschitz continuous with constant $L>0$; i.e., for any $x,y \in \mathbb{R}^n$,
$$\| \nabla f(x) - \nabla f(y) \|_2 \leq L \| x - y\|_2 $$
\end{asu}

The first theorem shows that the algorithm is well-defined.
\begin{thm}
\label{thm:finite_termination}
The backtracking projected line search (steps  20--22 of Algorithm \ref{alg1}) terminates in a finite number of iterations. 
\end{thm}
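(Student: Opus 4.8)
The plan is to show that the acceptance test $q(x^k) \ge q(\mathcal{P}^k(x^k + \alpha d^k))$ sought by the search holds for \emph{all} sufficiently small $\alpha > 0$; since the backtracking loop only halves $\alpha$ and $2^{-j} \to 0$, this forces termination after finitely many halvings. The argument rests on two facts: that for small $\alpha$ the projection $\mathcal{P}^k$ acts as the identity on $x^k + \alpha d^k$, and that on the orthant defined by $\zeta^k$ the nonsmooth model $q$ coincides with a smooth quadratic for which $d^k$ is a descent direction. (If $d^k = 0$ the test holds trivially at $\alpha = 1$, so assume $d^k \ne 0$.)

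First I would verify that $x^k + \alpha d^k$ stays in the orthant defined by $\zeta^k$ for all small $\alpha$, partitioning $\{1,\dots,n\}$ into $\mathcal{A}^k$ (the active set \emph{at the end} of the corrective cycle), $\mathcal{F}^k$, and the surviving free set $\mathcal{U}_F \setminus \mathcal{A}^k$. For $i \in \mathcal{A}^k$ we have $d_i^k = 0$ and $x_i^k = 0$, so the component stays at zero and the projection leaves it untouched. For $i \in \mathcal{F}^k$ we have $x_i^k \ne 0$, so $\sgn(x_i^k + \alpha d_i^k) = \sgn(x_i^k) = \zeta_i^k$ once $\alpha$ is small. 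The delicate case is $i \in \mathcal{U}_F \setminus \mathcal{A}^k$, where $x_i^k = 0$; here the termination condition $V^k = \emptyset$ of the corrective cycle is exactly \eqref{goods}, giving $\sgn(\hat x_i^k) = \sgn(d_i^k) = \zeta_i^k$, so that $\sgn(x_i^k + \alpha d_i^k) = \sgn(\alpha d_i^k) = \zeta_i^k$ for every $\alpha > 0$. Collecting these cases shows $\mathcal{P}^k(x^k + \alpha d^k) = x^k + \alpha d^k$ for all sufficiently small $\alpha$.

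Next I would exploit that both $x^k$ and $x^k + \alpha d^k$ lie in this orthant, where $\|x\|_1 = (\zeta^k)^T x$ and therefore $q$ agrees with the smooth model $\bar q^k$ of \eqref{subor}. Moreover $\nabla \bar q^k(x^k) = \nabla f(x^k) + \mu \zeta^k$ coincides with $g(x^k)$ on the free components (as one checks directly from \eqref{eqn:MNSG} and \eqref{zeta}), so the subproblem objective $\psi$ equals $\bar q^k(x^k + \cdot) - \bar q^k(x^k)$ on the feasible subspace and $d^k$ is a descent direction for $\bar q^k$, i.e. $g(x^k)^T d^k < 0$. Expanding the quadratic then gives $q(x^k + \alpha d^k) = q(x^k) + \alpha\, g(x^k)^T d^k + \tfrac{\alpha^2}{2}(d^k)^T H^k d^k$, whose linear term dominates for small $\alpha$, yielding $q(x^k + \alpha d^k) < q(x^k)$. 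Combined with the previous paragraph, $q(\mathcal{P}^k(x^k + \alpha d^k)) = q(x^k + \alpha d^k) \le q(x^k)$ for all small $\alpha$, so the search criterion is met after finitely many halvings.

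The main obstacle is controlling the projection. Absent $\mathcal{P}^k$, this is the textbook statement that a smooth function decreases along a descent direction; but $\mathcal{P}^k$ can discontinuously zero out components, so one must rule out any component crossing into the wrong orthant on a short step. The crucial ingredient is precisely that the corrective cycle has driven $V^k$ to $\emptyset$, guaranteeing that the surviving variables in $\mathcal{U}_F$ move in the sign prescribed by $\zeta^k$; this is what prevents the projection from being active for small $\alpha$ and is the reason the corrective cycle must be completed before the line search begins.
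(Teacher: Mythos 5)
Your proof is correct and follows essentially the same route as the paper's: you show the projection $\mathcal{P}^k$ is inactive for all sufficiently small $\alpha$ via the same component-wise case analysis (nonzero variables, fixed variables in $\mathcal{A}^k$, and zero variables in $\mathcal{U}_F$ handled via the corrective-cycle termination $V^k=\emptyset$), and then use that $d^k$ minimizes the smooth quadratic, which coincides with $q$ on the working orthant, to obtain the required decrease. The only cosmetic difference is that you make the descent property explicit through $g(x^k)^T d^k < 0$ and the quadratic expansion, whereas the paper simply invokes the subspace-minimizer property of $d^k$ to conclude $\bar{q}(x+\alpha d) \leq \bar{q}(x)$ for small $\alpha$.
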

\begin{proof}
Consider the $k^{th}$ iteration of Algorithm \ref{alg1}. For notational simplicity, we drop the iteration index and denote the iterate as $x$, the direction obtained after the corrective loop (steps 10--15) as $d$, and the smooth and non-smooth quadratic approximations as $\bar{q}$ and $q$, respectively. Along the same lines, let $\cal A$ and $\cal U$ be the active and unsure sets during this iteration.

We first show that there exists an $\bar{\alpha}>0$ such that for any $\alpha>0$ with $\alpha \leq\bar{\alpha}$, we have $\mathcal{P}(x+\alpha d) = x+\alpha d $. 
Let $\mathcal{I}_{1}=\{i\in\{ 1,2,3,\cdots,n\}:x_i\neq 0\}\}$ and $\mathcal{I}_{2}=\{i\in\{ 1,2,3,\cdots,n\}:x_i= 0\}\}$, and let $\bar{\alpha}>0$ such that $\bar{\alpha}<\big|\tfrac{x_{i}}{d_{i}}\big|$
for all $i\in\mathcal{I}_{1}$ with $d_i \neq 0$. 

Let $\alpha>0$ be such that $\alpha \leq\bar{\alpha}$ and $\zeta$ be defined in~\eqref{zeta}. We consider two cases:
\begin{itemize}

\item \textbf{Case 1: $i\in\mathcal{I}_{1}$}

Because $\alpha\leq\bar{\alpha}<\big|\tfrac{x_{i}}{d_{i}}\big|$, it is
clear that $\sgn(x_i+\alpha d_i)=\sgn(x_i)=\zeta_i$, and therefore $\mathcal{P}(x+\alpha d) = x+\alpha d $.
\item \textbf{Case 2: $i\in\mathcal{I}_{2}$}

By definition, $x_{i}=0$. If $i\in\mathcal{A}$ in step 19 of Algorithm 1, $d_i=0$ and $\sgn(x_i+\alpha d_i)=\sgn(x_i)=\zeta_i$.  Otherwise,  $i \in \mathcal{U}_F\subseteq \mathcal U$, and therefore $\sgn(-g_{i})=\zeta_i \in \{-1,1\} $. Assume that $\zeta_{i}=1$. Thus, $\sgn(-g_{i})=1$, and since $V^k=\emptyset$ and $i\in \mathcal U_F$, $\sgn(x^k+d^k)=\sgn(d^k)=1$, so $d_{i}> 0$ which in turn implies $\alpha d_i >0$.
The same conclusion can be made if $\zeta_{i}=-1$. 
Thus, $\mathcal{P}(x_i + \alpha d_i ) =\mathcal{P}(\alpha d_i ) = \alpha d_i = x_i + \alpha d_i $.

\end{itemize}

Because $d$ is a minimizer of $\bar{q}(x+d)$ in some subspace, we have $\bar{q}(x+\alpha d) \leq \bar{q}(x)$ for sufficiently small $\alpha\leq \bar\alpha$. Then, $\mathcal{P}(x+\alpha d) = x+\alpha d$, i.e., $x+\alpha d$ is in the same orthant as $x$, and therefore ${q}(x+\alpha d) = \bar {q}(x+\alpha d) \leq \bar{q}(x) = q(x)$.  As a consequence, the termination condition in the while-loop is satisfied after a finite number of iterations.


\end{proof}

We now show that by ensuring that $\phi$ at the new iterate is no larger than the majorizing function $\Gamma^k$, we can establish linear convergence. 
\begin{thm}
\label{thm:global_convergence}
Suppose that Assumptions \ref{asm:stronglyconvex} and \ref{asm:lipschitz} hold. Then, the iterates  $\{x^k\}$ generated by Algorithm~\ref{alg:complete} converge to the optimal solution $x^\star$ of problem \eqref{prob} at a linear rate.
\end{thm}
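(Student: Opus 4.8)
The plan is to show that the globalization in Algorithm~\ref{alg:complete} forces the accepted iterate to inherit the linear rate of the underlying ISTA step, irrespective of how the trial point $\hat x^k$ was produced by the inner machinery. The first step is to establish that every accepted iterate satisfies $\phi(x^{k+1}) \le \Gamma^k$. The crucial observation is that the surrogate $\Gamma^k$ in \eqref{eqn:gamma} majorizes $\phi$ at the ISTA point: applying the descent lemma implied by Assumption~\ref{asm:lipschitz}, namely $f(y) \le f(x^k) + \nabla f(x^k)^T(y - x^k) + \tfrac{L}{2}\|y - x^k\|_2^2$, with $y = \xista$ and adding $\mu\|\xista\|_1$ to both sides gives $\phi(\xista) \le \Gamma^k$. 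Hence the fallback $\bar{\alpha}=0$, which sets $x^{k+1}=\xista$, always passes the acceptance test of the globalization while-loop; the loop therefore terminates after finitely many bisections and the accepted point obeys $\phi(x^{k+1}) \le \Gamma^k$ in every case.

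Next I would control $\Gamma^k$ itself. Writing $Q_L(y,x^k) = f(x^k) + \nabla f(x^k)^T(y - x^k) + \tfrac{L}{2}\|y - x^k\|_2^2 + \mu\|y\|_1$, the soft-thresholding definition of $\xista$ makes it the \emph{exact} minimizer of the $L$-strongly convex function $Q_L(\cdot,x^k)$, and by construction $\Gamma^k = Q_L(\xista, x^k)$. Strong convexity of $Q_L(\cdot,x^k)$ yields the lower bound $Q_L(y, x^k) \ge \Gamma^k + \tfrac{L}{2}\|y - \xista\|_2^2$, while Assumption~\ref{asm:stronglyconvex} in the form \eqref{eqn:strongconvexity_differentiable} yields the upper bound $Q_L(y,x^k) \le \phi(y) + \tfrac{L-\lambda}{2}\|y - x^k\|_2^2$. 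Combining the two gives the key sandwich inequality
\begin{equation*}
\Gamma^k + \tfrac{L}{2}\|y - \xista\|_2^2 \le \phi(y) + \tfrac{L-\lambda}{2}\|y - x^k\|_2^2 \qquad \text{for all } y \in \mathbb{R}^n.
\end{equation*}

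I would then extract a contraction by choosing $y = (1-t)x^k + t x^\star$ with $t \in [0,1]$, discarding the nonnegative term $\tfrac{L}{2}\|y - \xista\|_2^2$ on the left, and bounding $\phi(y) \le (1-t)\phi(x^k) + t\phi(x^\star)$ by convexity. Using strong convexity once more to replace $\|x^k - x^\star\|_2^2$ by $\tfrac{2}{\lambda}(\phi(x^k) - \phi(x^\star))$ reduces the estimate to $\Gamma^k - \phi(x^\star) \le g(t)\,(\phi(x^k) - \phi(x^\star))$ with $g(t) = (1-t) + \tfrac{L-\lambda}{\lambda}t^2$; minimizing the (routine) scalar quadratic $g$ over $[0,1]$ produces a constant $\rho \defeq \min_{t\in[0,1]} g(t) \in (0,1)$. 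Combined with the first step this gives $\phi(x^{k+1}) - \phi(x^\star) \le \rho\,(\phi(x^k) - \phi(x^\star))$, i.e.\ linear convergence of the function values; a final appeal to strong convexity, $\tfrac{\lambda}{2}\|x^{k+1} - x^\star\|_2^2 \le \phi(x^{k+1}) - \phi(x^\star)$, upgrades this to a linear rate for the iterates themselves.

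The main obstacle is that the accepted iterate $x^{k+1}$ lies on the segment from $\xista$ toward the trial point $\hat x^k$ and need not coincide with the ISTA point, so a textbook ISTA convergence theorem cannot be invoked directly; the only quantity under control is the surrogate bound $\phi(x^{k+1}) \le \Gamma^k$. The heart of the argument is therefore to show that $\Gamma^k$ --- an \emph{upper} estimate of $\phi(\xista)$ rather than its true value --- nonetheless contracts geometrically relative to $\phi(x^k) - \phi(x^\star)$, which is precisely what the sandwich inequality delivers. A minor technical point to dispatch is the degenerate case $L=\lambda$ (a quadratic $f$), where $g$ is minimized at $t=1$ and the bound already forces $\Gamma^k \le \phi(x^\star)$, giving one-step convergence and only strengthening the conclusion.
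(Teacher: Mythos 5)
Your proposal is correct, and its architecture matches the paper's proof: both first use the descent lemma from Assumption~\ref{asm:lipschitz} to show $\phi(\xista)\le\Gamma^k$, hence that the globalization loop terminates (the $\bar\alpha=0$ fallback always passes) and every accepted iterate satisfies $\phi(x^{k+1})\le\Gamma^k$; both then exploit the fact that $\xista$ minimizes the proximal model, so that $\Gamma^k\le Q_L(y,x^k)\le\phi(y)+\tfrac{L-\lambda}{2}\|y-x^k\|_2^2$ for test points $y$ on the segment from $x^k$ to $x^\star$ (your strong-convexity lower bound $\tfrac{L}{2}\|y-\xista\|_2^2$ in the sandwich is discarded and plays no role, so it is the same inequality). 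The only genuine divergence is the finishing step. The paper takes the specific point $y=x^k+\tfrac{\lambda}{L}(x^\star-x^k)$ and applies the strong-convexity form of Jensen's inequality to $\phi$ along the segment, which makes the quadratic terms cancel \emph{exactly} and yields the clean rate $1-\tfrac{\lambda}{L}$. You instead use plain convexity of $\phi$ together with the quadratic-growth bound $\|x^k-x^\star\|_2^2\le\tfrac{2}{\lambda}(\phi(x^k)-\phi(x^\star))$ and then optimize the scalar parameter $t$, obtaining $\rho=\min_{t\in[0,1]}\bigl(1-t+\tfrac{L-\lambda}{\lambda}t^2\bigr)$, which equals $1-\tfrac{\lambda}{4(L-\lambda)}$ for $L\ge\tfrac{3\lambda}{2}$ and $\tfrac{L-\lambda}{\lambda}$ otherwise. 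Both constants lie in $[0,1)$ (with your degenerate case $L=\lambda$ correctly dispatched), but the paper's constant $1-\tfrac{\lambda}{L}$ is uniformly smaller, so the paper's finish buys a sharper rate, while yours is more generic in that it avoids needing the exact cancellation at the magic value $t=\lambda/L$. A small point in your favor: you explicitly convert the function-value rate into an (R-linear) rate for the iterates via $\tfrac{\lambda}{2}\|x^{k+1}-x^\star\|_2^2\le\phi(x^{k+1})-\phi(x^\star)$, a step the paper's proof leaves implicit even though the theorem is stated for the iterates.
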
 
\begin{proof}

\label{proof:global_convergence}

Consider the $k^{th}$ iteration of Algorithm \ref{alg:complete}. For notational simplicity, let us drop the iteration index and denote the minimum norm subgradient as $g$, the Hessian approximation as $H$, and the iterate as $x$.
Further, as is well known, the ISTA point $\xista$, computed in step 5 of Algorithm 2, is the minimizer of a proximal approximation of $\phi(x)$,
\begin{align}
\label{eqn:ista_defn}
\xista & = \arg\min_{y}f(x)+(y-x)^{T}\nabla f(x)+ \frac{L}{2}\|y-x\|_2^{2}+\mu\|y\|_{1}.
\end{align}
Because of Assumption \ref{asm:lipschitz}, for any $z_1,z_2 \in \mathbb{R}^n$,
\begin{align}
f(z_2) &\leq f(z_1) + \nabla f(z_1)^T (z_2-z_1) + \frac{L}{2} \| z_2-z_1\|_2^2.
\end{align}
In particular, by setting $z_1=x$, $z_2=\xista$, we get
\begin{eqnarray}
\phi(\xista) & = & f(\xista) + \mu\|\xista\|_{1}  \nonumber \\
&\leq & f(x)+\nabla f(x)^{T}(\xista-x)+\frac{L}{2}\|\xista-x\|_2^{2}+\mu\|\xista\|_{1} \equiv  \Gamma^k. \label{eqn:satisfaction_ista}
\end{eqnarray}

Let us denote the point obtained as a consequence of the globalization mechanism, which will be the new iterate, as $x^+$. This corresponds to $x^{k+1}$ in step 14 of Algorithm \ref{alg:complete}. Realize that the loop in steps 8--13 of Algorithm 2 terminates finitely because once $\bar{\alpha}$ drops to a value below $\epsilon$, it is set to $0$ and then $\phi(\xista+\bar{\alpha}\bar{d}) = \phi(\xista)$ and the sufficiency condition (step 8 of Algorithm \ref{alg:complete}) is trivially satisfied by \eqref{eqn:satisfaction_ista}. 

By design, our algorithm generates the point $x^{+}$ such that
\begin{align}
\phi(x^{+})\leq f(x)+\nabla f(x)^{T}(\xista-x)+\frac{L}{2}\|\xista-x\|^{2}+\mu\|\xista\|_{1}.
\label{eqn:xplus_ista_bound}
\end{align}
Combining this equation with the fact that $\xista$ is the minimizer in objective \eqref{eqn:ista_defn}, we have for
any $d\in \mathbb{R}^n$ and $y=x+\frac{\lambda}{L}d$  that
\begin{align*}
\phi(x^{+}) & \leq  f(x)+\nabla f(x)^{T} \left( \frac{\lambda}{L}d \right)+\frac{L}{2} \left\lVert \frac{\lambda}{L}d \right\lVert_2 ^{2}+\mu \left\lVert x+\frac{\lambda}{L}d \right\lVert _{1}\\
 & \leq  \phi \left(x+\frac{\lambda}{L}d \right) -\frac{\lambda}{2} \left\lVert\frac{\lambda}{L}d \right\lVert_2^2 + \frac{L}{2}\left\lVert \frac{\lambda}{L}d\right\lVert^{2}_2 \\
 & =  \phi \left(x+\frac{\lambda}{L}d \right)+\frac{\lambda^{2}}{2L}\left(1-\frac{\lambda}{L} \right)\|d\|_2^{2},
\end{align*}
where the second inequality follows from \eqref{eqn:strongconvexity_differentiable} with $y=x+\frac{\lambda}{L}d$.
In particular, we can set $d$ to be $ x^\star -x$ and obtain
\begin{align}
\phi(x^{+})  &\leq  \phi \left(x+\frac{\lambda}{L}(x^{\star}-x) \right)+\frac{\lambda^{2}}{2L}\left(1-\frac{\lambda}{L} \right)\|x^{\star}-x\|_2^{2}. \label{eqn:midwayproof}
\end{align}
Using Assumption \ref{asm:stronglyconvex} and the convexity of the $\ell_1$-norm, we have, for any $z_1,z_2 \in \mathbb{R}^n$ and $t\in[0,1]$, 
\begin{equation*}
\phi(tz_1+(1-t)z_2)  \leq  t\phi(z_1)+(1-t)\phi(z_2)-\frac{1}{2}\lambda t (1-t)\|z_1-z_2\|_{2}^{2}.
\end{equation*}
Setting $z_1=x$, $z_2=x^{\star}$, and $t= \left(1-\frac{\lambda}{L}\right)$, we get
\begin{equation*}
\phi \left( x +\frac{\lambda}{L}(x^{\star}-x) \right)  \leq  \frac{\lambda}{L}\phi(x^{\star})+\left( 1-\frac{\lambda}{L} \right) \phi(x)-\frac{\lambda^2}{2L} \left( 1-\frac{\lambda}{L} \right) \|x^{\star}-x\|_{2}^{2}.
\end{equation*}
Combining this result with \eqref{eqn:midwayproof}, we get
\begin{eqnarray*}
\phi(x^{+}) & \leq & \frac{\lambda}{L}\phi(x^{\star})+\left(1-\frac{\lambda}{L} \right)\phi(x)-\frac{\lambda^2}{2L}\left(1-\frac{\lambda}{L} \right)\|x^{\star}-x\|_{2}^{2} +\frac{\lambda^2}{2L}\left(1-\frac{\lambda}{L}\right)\|x^{\star}-x\|_2^{2}\\
 &  = & \phi(x^{\star})+\left( 1-\frac{\lambda}{L} \right) (\phi(x)-\phi(x^{\star})),
\end{eqnarray*}
and therefore
$$\phi(x^{+})-\phi(x^{\star})  \leq  \left( 1-\frac{\lambda}{L} \right) (\phi(x)-\phi(x^{\star})).$$
By reintroducing the iteration index and using recursion, we have that
$$\phi(x^{k})-\phi(x^{\star})  \leq  \left( 1-\frac{\lambda}{L} \right) ^k (\phi(x^0)-\phi(x^{\star}))$$
as required. 

\end{proof}
\section{Reproducible Research}   \label{reproducible}
The MATLAB code used to generate the ``Synthetic" problem is presented below. Given a dimension \texttt{n}, we use the following code snippet to generate the vector of labels (denoted by \texttt{y})  and the data matrix (denoted by \texttt{X}).

\begin{verbatim}
y=-1+(rand(n,1)>0.5)*2;
X = rand(n,n);
X = X + X';
mineig = min(eig(X));
if(mineig<0)
 X = eye(size(X))*mineig*-2+X;
end
X = chol(X);
\end{verbatim}

\end{document}